\documentclass[12pt]{article}
\usepackage[a4paper, margin=2.3cm]{geometry}
\usepackage{amssymb,amsthm, amsmath}
\usepackage{hyperref}
\usepackage{color}
\usepackage[normalem]{ulem}

\newtheorem{theorem}{Theorem}[section]

\newtheorem{lemma}[theorem]{Lemma}

\theoremstyle{definition}
\newtheorem{remark}[theorem]{Remark}

\newcommand{\R}{\mathbb R}
\newcommand{\PP}{\mathbb P}

\author{Alex Cohen \and Frank de Zeeuw}
\title{A Sylvester--Gallai theorem for cubic curves}

\begin{document}
\date{}
\maketitle

\begin{abstract}
We prove a variant of the Sylvester--Gallai theorem for cubics (algebraic curves of degree three):
If a finite set of sufficiently many points in $\R^2$ is not contained in a cubic, 
then there is a cubic that contains exactly nine of the points.
This resolves the first unknown case of a conjecture of Wiseman and Wilson from 1988, 
who proved a variant of Sylvester--Gallai for conics and conjectured that similar statements hold for curves of any degree.
\end{abstract}

\section{Introduction}

The Sylvester--Gallai theorem is a classic result in combinatorial geometry about points and line in the real plane.
For an overview of its background and history, see for instance \cite{GT}.

\begin{theorem}[Sylvester--Gallai]\label{thm:SG}
If $A$ is a finite set of points in $\R^2$ that is not contained in a line,
then there is a line that contains exactly two points of $A$.
\end{theorem}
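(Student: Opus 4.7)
The plan is to use Kelly's classical extremal-distance argument, turning the problem into a contradiction driven by minimizing a Euclidean distance over a finite configuration.

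Let $\mathcal{L}$ denote the finite family of lines spanned by at least two points of $A$. Since $A$ is not contained in a line, the collection of pairs $(p, \ell)$ with $p \in A$, $\ell \in \mathcal{L}$, and $p \notin \ell$ is nonempty, so I can choose such a pair minimizing the positive distance $d(p, \ell)$. I will assume toward a contradiction that every line in $\mathcal{L}$ meets $A$ in at least three points; then the chosen $\ell$ carries points $q_1, q_2, q_3 \in A$. Let $f$ be the foot of the perpendicular from $p$ to $\ell$. By pigeonhole, two of the $q_i$ lie on the same closed half-line of $\ell$ cut off by $f$; call them $q_1$ and $q_2$, with $q_1$ at least as close to $f$ as $q_2$.

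The finishing move is to consider the line $\ell' \in \mathcal{L}$ passing through $p$ and $q_2$. A short similar-triangles computation in the right triangle $p f q_2$ gives
\[
d(q_1, \ell') \;=\; |q_1 q_2| \cdot \frac{d(p, \ell)}{|p q_2|} \;<\; d(p, \ell),
\]
using $|q_1 q_2| \leq |f q_2| < |p q_2|$, where the last inequality is strict because $d(p,\ell) > 0$. Since $q_1 \notin \ell'$ (otherwise $\ell' = \ell$, forcing $p \in \ell$), the pair $(q_1, \ell')$ is a legitimate competitor with strictly smaller distance, contradicting minimality of $(p, \ell)$.

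The main obstacle is conceptual rather than technical: one must commit to the correct extremal quantity, namely the point-to-line distance rather than, say, the number of incidences. Once this is fixed, the pigeonhole step and the elementary trigonometry are essentially forced. An alternative I would mention is Melchior's proof, which uses projective duality and Euler's formula on the dual point-line arrangement to obtain the stronger statement that the number of ordinary lines is at least three; Kelly's argument is, however, shorter, real-affine, and self-contained.
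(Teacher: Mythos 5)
Your proof is correct: this is Kelly's classical minimal-distance argument, and every step checks out — the area identity giving $d(q_1,\ell') = |q_1q_2|\,d(p,\ell)/|pq_2|$, the pigeonhole placement of $q_1,q_2$ on one side of the foot $f$, and the verification that $(q_1,\ell')$ is an admissible competitor pair. The paper itself offers no proof of this theorem; it is quoted as a classical result (with a pointer to the survey of Green and Tao), so there is nothing to compare against — your write-up simply supplies the standard proof that the authors take for granted.
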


This simple statement has inspired many variants.
Here we are interested in similar statements where lines are replaced by algebraic curves of higher degree. 
Elliott \cite{E} initiated the study of such questions by proving an analogue for circles: If a finite set of points in $\R^2$ is not contained in a line or a circle, then there is a circle that contains exactly three of the points. 
Note that three is the number of points that (typically) determine a circle.
In general, one expects a Sylvester--Gallai-type statement for a certain type of curve to deliver a curve containing exactly the number of points that would typically determine such a curve. 
Such a curve is called \emph{ordinary}.
Thus, an ordinary line is a line with two points from the given set, and an ordinary circle is a circle with three points.

Wiseman and Wilson \cite{WW} proved the following theorem on ordinary conics (where by \emph{conic} we mean an algebraic curves of degree two, not necessarily irreducible). Note that five is the number of points that (typically) determine a conic, so an ordinary conic contains five points from the given set.

\begin{theorem}[Wiseman--Wilson]\label{thm:WW}
If $A$ is a finite set of points in $\R^2$ that is not contained in a conic,
then there is a conic that contains exactly five points of $A$.
\end{theorem}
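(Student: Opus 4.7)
The plan is to reduce Theorem~\ref{thm:WW} to a higher-dimensional Sylvester--Gallai statement via the Veronese embedding. Consider the map $\nu\colon \R^2 \to \R^5$ defined by $\nu(x,y) = (x, y, x^2, xy, y^2)$. A conic $ax^2+bxy+cy^2+dx+ey+f=0$ in $\R^2$ is precisely the $\nu$-preimage of the affine hyperplane $dX_1+eX_2+aX_3+bX_4+cX_5+f=0$ in $\R^5$, so conics in $\R^2$ correspond bijectively to affine hyperplanes in $\R^5$; since $\nu$ is injective, corresponding $C$ and $H$ satisfy $|A\cap C| = |\nu(A)\cap H|$. Under this dictionary, ``$A$ is not contained in any conic'' translates to ``$\nu(A)$ affinely spans $\R^5$,'' and ``some conic contains exactly five points of $A$'' translates to ``some affine hyperplane contains exactly five points of $\nu(A)$.''

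The next step would be to establish a higher-dimensional Sylvester--Gallai statement for $\nu(A) \subset \R^5$: a finite set of points in $\R^5$ that is suitably non-degenerate and affinely spans $\R^5$ admits an affine hyperplane meeting it in exactly five points. A natural approach is induction on the ambient dimension $d$, with base case $d=2$ supplied by Theorem~\ref{thm:SG}. The inductive step would project from a carefully chosen point $p \in \nu(A)$ onto a hyperplane, apply the theorem in $\R^{d-1}$ to the projected set, and then lift the resulting ordinary hyperplane back to $\R^d$ by adjoining $p$.

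The main obstacle I anticipate is that the \emph{naive} higher-dimensional Sylvester--Gallai statement is false for arbitrary configurations: in $\R^3$, two skew lines each carrying $m \geq 4$ points affinely span $\R^3$ but admit no plane meeting the configuration in exactly three points, since any plane through two points of a given line must contain that entire line. Ruling out such obstructions requires exploiting the fact that $\nu(A)$ lies on the Veronese surface $\nu(\R^2) \subset \R^5$, which contains no lines of $\R^5$: the $\nu$-image of a line in $\R^2$ is a smooth conic in $\R^5$, and in particular no three distinct points of $\nu(A)$ are collinear. I expect that turning this geometric non-degeneracy into a working inductive step for the higher-dimensional Sylvester--Gallai theorem, specifically on Veronese images, is the combinatorial heart of the proof.
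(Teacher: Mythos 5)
There is a genuine gap: your proposal sets up a translation of the problem but does not prove the statement that actually carries all the difficulty. The Veronese reduction to $\R^5$ is fine as far as it goes (it is essentially the starting point of the proof in \cite{BVZ}, which the paper cites as one of the earlier arguments), but everything then hinges on a Sylvester--Gallai theorem for points and hyperplanes in $\R^5$ restricted to Veronese images, and you explicitly leave this unproven, remarking only that you ``expect'' the non-degeneracy of the Veronese surface can be turned into a working induction. As you yourself note, the naive hyperplane version of Sylvester--Gallai is false in dimension $\geq 3$; moreover, the single property you extract from the Veronese surface (no three image points collinear) is not known to be enough to restore it, and your proposed inductive step --- project from a point of $\nu(A)$ and recurse --- does not obviously preserve even that property, since projections of a Veronese surface can create collinearities. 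No precise inductive hypothesis is formulated, so the argument cannot be checked or completed as written. A smaller issue: hyperplanes in your $\R^5$ coordinates with $a=b=c=0$ correspond to lines in $\R^2$, not conics, so the claimed bijection between conics and affine hyperplanes needs a correction (and the found hyperplane must be shown, or perturbed, to correspond to a genuine degree-two curve).

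For comparison, the paper sidesteps higher-dimensional Sylvester--Gallai entirely. It fixes a carefully chosen non-collinear triple $B\subset A$ (using Lemma \ref{lines_few_points}), passes to the two-dimensional parameter space $\mathcal{P}^2_B$ of conics through $B$, and maps each remaining point $x$ to the line $\mathcal{P}^2_{B\cup\{x\}}$ in that projective plane. The work goes into Lemma \ref{lem:goodBforconics}, which shows $B$ can be chosen so that this map is injective off at most one exceptional line and the remaining lines are not concurrent; then the planar dual Sylvester--Gallai theorem (Lemma \ref{lem:SGdual}) produces a point on exactly two image lines, i.e.\ a conic through $B$ and exactly two further points of $A$. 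If you want to pursue your route, the missing higher-dimensional statement is where all the effort must go, and you should expect it to require substantially more than ``no three collinear.''
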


The proof of Wiseman and Wilson was somewhat lengthy and convoluted.
Czaplinski et al. \cite{Polish} provided a shorter proof using Cremona transformations.
Another short proof was given by Boys, Valculescu, and De Zeeuw \cite{BVZ}, based on Veronese mappings.

Wiseman and Wilson \cite{WW} conjectured that a similar statement holds for curves of any degree $d$; this is Problem 7.2.3 of Brass, Moser, and Pach \cite{BMP}.
Note that a curve of degree $d$ is typically determined by $d(d+3)/2$ points, so the statement should be as follows: If a finite set of points in $\R^2$ is not contained in a curve of degree $d$, then there is a curve of degree $d$ that contains exactly $d(d+3)/2$ of these points.
The papers \cite{Polish, BVZ} both mention this conjecture, but their techniques appear to be difficult to extend to curves of higher degree.

In this paper, we give yet another proof of Theorem \ref{thm:WW}, which turns out to be straightforward enough that we can extend it to curves of degree three, proving the Wiseman--Wilson conjecture for cubics (for sufficiently large sets). 
Our main theorem is the following statement.
To be precise, by a \emph{cubic} we mean an algebraic curve of degree three, i.e.
the zero set in $\R^2$ of a polynomial of degree three; we do not require the polynomial to be irreducible.

\begin{theorem}\label{thm:main}
If $A$ is a finite set of at least $250$ points in $\R^2$ that is not contained in a cubic,
then there is a cubic that contains exactly nine points of $A$.
\end{theorem}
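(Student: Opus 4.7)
The plan is to linearize the problem with the cubic Veronese embedding
\[
\nu_3 : \R^2 \to \R^9, \qquad \nu_3(x,y) = (x,\,y,\,x^2,\,xy,\,y^2,\,x^3,\,x^2 y,\,x y^2,\,y^3).
\]
Under this map, cubics in $\R^2$ correspond bijectively to affine hyperplanes in $\R^9$, and points lying on a common cubic correspond to points lying on a common hyperplane. Hence the hypothesis that $A$ is not contained in a cubic is equivalent to saying that $\nu_3(A)$ affinely spans $\R^9$, and the desired conclusion---a cubic meeting $A$ in exactly nine points---is equivalent to the existence of an \emph{ordinary hyperplane} of $\nu_3(A)$: an affine hyperplane that is the affine hull of nine (necessarily affinely independent) points of $\nu_3(A)$ and contains no other point of the set. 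This is exactly how Boys--Valculescu--De~Zeeuw \cite{BVZ} reformulate Theorem~\ref{thm:WW} for $d=2$, via $\nu_2$, and the present goal is to carry their framework through for $d=3$.

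The core of the proof is therefore a higher-dimensional ordinary-hyperplane statement: any sufficiently large finite subset of $\R^9$ that affinely spans $\R^9$ contains an ordinary hyperplane. For $d=2$ this is Sylvester--Gallai (Theorem~\ref{thm:SG}), and analogues in higher dimensions are available through results of Hansen, Bonnice--Edelstein, and Green--Tao/Ball on ordinary hyperplanes, but all of these require either strong genericity hypotheses or quantitative lower bounds on $|S|$ in order to rule out degenerate configurations in which many points of $S$ lie in a flat of dimension at most $7$.

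Crucially, for us $\nu_3(A)$ is not arbitrary: it lies on the $2$-dimensional Veronese surface in $\R^9$, so every linear degeneracy among the image points comes from an algebraic coincidence in the plane. Concretely, a $k$-flat in $\R^9$ meets the Veronese surface in the $\nu_3$-image of an algebraic curve in $\R^2$, so if many points of $\nu_3(A)$ lie in a small flat then many points of $A$ lie on a line, on a conic, or on a reducible cubic. The second step in the plan is to exploit this rigidity: classify the possible low-dimensional linear degeneracies, translate each into a low-degree configuration on the $A$-side, and invoke Theorem~\ref{thm:SG} and Theorem~\ref{thm:WW} to peel these configurations off until a ``generic'' subset remains on which the higher-dimensional ordinary-hyperplane argument can be executed.

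The main obstacle, I expect, lies in the analysis of the intermediate degenerate configurations---especially subsets of $A$ living on a conic or on a union of a few lines, and the Cayley--Bacharach phenomenon: eight general points of $A$ already lie on a one-parameter pencil of cubics, and one must track exactly which ninth points of $A$ each such pencil picks up (since if some pencil has its ninth base point in $A$, then every cubic through the eight points contains a ninth point of $A$, blocking ordinariness). The threshold $|A|\geq 250$ should emerge precisely from making these exclusions quantitative: after removing any low-degree bad configuration of bounded size from $A$, enough points remain to guarantee an ordinary hyperplane of $\nu_3(A)$, and hence a cubic meeting $A$ in exactly nine points.
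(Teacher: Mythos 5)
There is a genuine gap at the heart of your plan: the higher-dimensional ``ordinary hyperplane'' statement you want to apply in $\R^9$ does not exist. The Sylvester--Gallai theorem fails in dimension $\geq 3$ in exactly the form you need: there are finite sets spanning $\R^3$ (e.g.\ points distributed on two skew lines) in which no hyperplane meets the set in exactly an affinely independent spanning subset, and the results of Hansen, Bonnice--Edelstein, and Green--Tao/Ball that you cite prove different statements --- either a hyperplane all but one of whose points lie in a codimension-two flat, or ordinary hyperplanes under a strong general-position hypothesis (no $n-1$ points in an $(n-3)$-flat) that is systematically violated by $\nu_3(A)$: any four collinear points of $A$ land in a $3$-flat of $\R^9$, any seven co-conic points in a $6$-flat, and by Cayley--Bacharach even nine points in general position in the plane can fail to span a hyperplane's worth of independent conditions. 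Your proposal acknowledges these degeneracies but offers no mechanism for handling them beyond ``peel off bad configurations and invoke the higher-dimensional argument''; since the higher-dimensional argument is not a theorem, and since the degeneracies coming from lines and conics in $A$ cannot in general be removed by deleting a bounded number of points (half of $A$ could lie on one line), the plan does not close. This is precisely the obstruction the paper discusses in its conclusion when explaining why the degree-four case is open.

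The paper circumvents this by never leaving dimension two. Instead of one global Veronese embedding, it fixes a carefully chosen set $B$ of seven points of $A$ (no three collinear and no six co-conic, up to one controlled exception) so that the parameter space $\mathcal{P}^3_B$ of cubics through $B$ is a projective \emph{plane}; each remaining point of $A$ then determines a \emph{line} in that plane, and the honest planar dual Sylvester--Gallai theorem (Lemma~\ref{lem:SGdual}) produces a point on exactly two of those lines, i.e.\ a cubic through $B$ and exactly two further points of $A$. All of the real work --- and the source of the constant $250$ --- is in Lemma~\ref{lem:goodBforcubics}, a five-case analysis showing that such a $B$ can always be selected so that at most one line of the dual configuration has more than two preimages and the rest are not concurrent. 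If you want to salvage your approach, you would need to prove an ordinary-hyperplane theorem for finite subsets of the cubic Veronese surface specifically; that is not easier than the original problem, and no such statement is currently known.
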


The key idea in our proof is to work in a parameter space.
Roughly speaking, after fixing some of the given points, we move to the parameter space of curves passing through those fixed points, and in that space we use the Sylvester--Gallai theorem to find a curve that passes through the right number of further points.   
Specifically, in the case of cubics, we fix seven points, and consider the space of cubics passing through those points; 
if the seven points are carefully chosen, 
that parameter space should be two-dimensional. 
A point in this space corresponds to a cubic passing through the seven fixed points, 
and a point in the original plane corresponds to a line in this space. 
Thus we can use a dual version of the Sylvester--Gallai theorem to get a point in the parameter space that is contained in exactly two lines, which in the original plane corresponds to a cubic containing exactly two of the given points, as well as the seven fixed points,
making it an ordinary cubic.

Turning this outline into a proof is harder than it sounds. 
Most importantly, the fixed points have to be chosen so that the parameter space has the right dimension, and so that we can safely apply the dual Sylvester--Gallai theorem.
For conics this is not too hard, but for cubics it requires more work.
In higher degree the difficulties are greater,
and we have not been able to prove variants for curves beyond cubics; in fact, we are not entirely convinced that the conjecture holds in all degrees.

One shortcoming of our result is that it only holds for sufficiently large sets. 
This seems to be an artifact of our proof, and we expect that Theorem \ref{thm:main} holds for sets of any size. 
The number $250$ that results from our proof could probably be improved somewhat, but we were not able to lower it all the way to $10$.
It should also be noted that in our Theorem \ref{thm:main}, the cubic is not necessarily \emph{determined} by the nine points, i.e., it may not be the only cubic containing those nine points. In each of \cite{WW, Polish, BVZ}, the conic containing five points is also determined by those five points. Our proof does not seem to be able to give a cubic through nine points that is also determined by the nine points.

The structure of this paper is very straightforward: In Section $2$ we introduce the parameter space terminology and state some useful lemmas, in Section $3$ we give our new proof of Theorem \ref{thm:WW} on ordinary conics, and in Section $4$ we prove our main Theorem \ref{thm:main} on ordinary cubics.

\newpage

\section{Definitions and lemmas}

Throughout our proofs we will work in real projective space $\R\PP^2$.
For a vector space $V$, we write $\PP(V)$ to denote the projective space obtained from $V$, consisting of all lines through the origin in $V$.
We write $\R[x,y,z]_d$ for the ring of homogeneous polynomials of degree $d$ in three variables over $\R$.
This is a vector space of dimension $d(d+3)/2+1$.
For a finite point set $A \subset \R\PP^2$, we denote by $S^d_A$ the subspace of degree $d$ polynomials that vanish on $A$, i.e.
\[S^d_A = \{ f \in \R[x,y,z]_d : f(p)=0~\text{for all}~ p\in A \}. \]
We denote the associated projective space by
\[\mathcal{P}^d_A = \PP\left(S^d_A\right).\]
Then $\mathcal{P}^d_A$ is the \emph{parameter space} of curves of degree $d$ that contain $A$.
Indeed, a point in $\mathcal{P}^d_A$ is an equivalence class consisting of scalar multiples of a homogeneous polynomial in $S^d_A$, which defines a curve in $\R\PP^2$ that contains $A$.

We will frequently encounter the situation where $\mathcal{P}^d_A$ is a projective plane, 
and for some $x\not\in A$ the space $\mathcal{P}^d_{A\cup\{x\}}$ is a projective line.
Note that with a slight abuse of notation we can consider the line $\mathcal{P}^d_{A\cup\{x\}}$ as a subset of the plane $\mathcal{P}^d_A$; indeed, $S^d_{A\cup\{x\}}$ is a linear subspace of $S^d_A$, so the set of lines through the origin in  $S^d_{A\cup\{x\}}$ is naturally a subset of the set of lines through the origin in $S^d_A$.

The condition of vanishing at one point is a linear condition on the vector space of polynomials.
If the points in $A$ impose linearly independent conditions, then the subspace $S^d_A$ has dimension $d(d+3)/2+1 - |A|$, as long as this number is nonnegative,
and then the parameter space $\mathcal{P}^d_A$ has dimension $d(d+3)/2 - |A|$ as a projective variety.
Of course this is not the case for all point sets. 
For instance, if $A$ consists of five collinear points, then there are infinitely many conics containing $A$ (the line with five points combined with any other line), so $\mathcal{P}^2_A$ cannot have the expected dimension $0$.

The following basic fact (see Eisenbud, Green, and Harris \cite[Proposition 1]{EGH}) characterizes the small point sets whose parameter space does not have the expected dimension; 
in the words of \cite{EGH}, these point sets ``fail to impose independent conditions on curves of degree $d$".

\begin{lemma}\label{lem:eisenbud}
Let $A$ be a set of at most $2d+2$ points in $\R\PP^2$. 
The parameter space $\mathcal{P}^d_A$ fails to have dimension $d(d+3)/2 - |A|$  if and only if either $d+2$ of the points of $A$ are collinear or $|A| = 2d+2$ and $A$ is contained in a conic. 
\end{lemma}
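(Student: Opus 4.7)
My plan is to handle the two implications separately. The sufficiency (``if'') direction is a straightforward divisibility argument. If $d+2$ points of $A$ lie on a line $L$, then for any $f \in S^d_A$ the restriction $f|_L$ is a univariate polynomial of degree at most $d$ vanishing at $d+2$ points, so $f|_L \equiv 0$ and the linear form cutting out $L$ divides $f$; this yields $S^d_A = L \cdot S^{d-1}_{A \setminus L}$, and a dimension count shows $\mathcal{P}^d_A$ has dimension at least one larger than expected. If $|A| = 2d+2$ and $A$ lies on a conic $C$, I argue every $f \in S^d_A$ is divisible by $C$: when $C$ is irreducible this is immediate from Bezout, while when $C$ is a union of two lines, the assumption that no $d+2$ points of $A$ are collinear together with a pigeonhole argument forces exactly $d+1$ points of $A$ on each component, and the preceding line argument then applies to each component. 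Either way $S^d_A = C \cdot \R[x,y,z]_{d-2}$, and the dimension of $\mathcal{P}^d_A$ again exceeds the expected value by one.

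For the necessity direction I would proceed by induction on $d$, with $d \leq 2$ handled by inspection. Assume $A$ fails to impose independent conditions on degree-$d$ curves and that no $d+2$ of its points are collinear; I want to conclude $|A| = 2d+2$ and $A$ lies on a conic. Let $L$ be a line through the maximum number $k$ of points of $A$, so $2 \leq k \leq d+1$. I consider the restriction map $\rho : S^d_A \to \R[L]_d$, whose kernel is $L \cdot S^{d-1}_{A \setminus L}$ and whose image lies in the $(d+1-k)$-dimensional subspace of polynomials on $L$ vanishing at $A \cap L$. A direct dimension chase then converts the assumed surplus dimension of $S^d_A$ into a surplus dimension for $S^{d-1}_{A \setminus L}$; since $|A \setminus L| \leq 2d = 2(d-1)+2$, the inductive hypothesis applies.

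The inductive conclusion gives two alternatives: (a) some $d+1$ points of $A \setminus L$ are collinear on a line $M$, or (b) $|A \setminus L| = 2d$ with $A \setminus L$ on a conic. The main obstacle is the combinatorial bookkeeping required to combine these alternatives with the no-$(d+2)$-collinear hypothesis and the maximality of $L$. In case (a), the point $L \cap M$ cannot lie in $A$, so $M$ supplies another large line and one can iterate the argument (or note that $L \cup M$ is itself a reducible conic containing $k + (d+1)$ points of $A$). In case (b), the bound $|A| \leq 2d+2$ together with $k \geq 2$ forces $k = 2$ and $|A| = 2d+2$, and a final argument must show that the two points of $A \cap L$ already lie on the conic containing $A \setminus L$. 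Patching these threads together into a single conic through all of $A$ is the delicate step, and is where the proof (as in Eisenbud--Green--Harris) does its most careful case analysis.
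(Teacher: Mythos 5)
The paper itself does not prove this lemma; it imports it verbatim from Eisenbud--Green--Harris \cite{EGH} (their Proposition 1), so there is no in-paper argument to compare against and your proposal has to stand on its own. On that basis: your sufficiency (``if'') direction is essentially complete and correct. Two small things to tighten: on each component of a reducible conic you only get $d+1$ points of $A$, not $d+2$, so you should note explicitly that $d+1$ zeros already kill a binary form of degree $d$ (your ``preceding line argument'' was phrased for $d+2$ points); and you should dispose of the degenerate case where the conic is a double line, which folds into the collinear alternative.

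The necessity (``only if'') direction --- which is in fact the only direction the paper uses, since Lemmas 3.1 and 4.1 are its contrapositive --- has a genuine gap. Your reduction is sound: restricting to a maximal line $L$, the sequence $0 \to L\cdot S^{d-1}_{A\setminus L} \to S^d_A \to \R[L]_d$ does transfer the surplus dimension to $S^{d-1}_{A\setminus L}$, and $|A\setminus L| \le 2(d-1)+2$ makes the induction legal. But the two terminal cases are where the content of the proposition lives, and you have not carried them out. In case (a), your claim that $L\cap M$ cannot lie in $A$ is false as stated; the correct dichotomy is that if $L\cap M\in A$ then $M$ carries $d+2$ points of $A$ and you land in the first alternative, while if not, then $|A|\ge 2(d+1)$ forces $|A|=2d+2$ and $A\subset L\cup M$. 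In case (b), after forcing $k=2$ and $|A|=2d+2$, showing that the two points of $A\cap L$ lie on the conic through $A\setminus L$ is not bookkeeping: one must produce, from the dimension surplus, a degree-$d$ curve through $A$ that is forced by a B\'ezout count to contain that conic, and this is precisely where \cite{EGH} does its real work. As written, the proposal is a correct proof of one implication and an unfinished plan for the implication the paper actually relies on.
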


We will also use two particular facts related to the Sylvester--Gallai theorem.
The first is a kind of dual version of Theorem \ref{thm:SG} in the sense of projective point-line duality. 

\begin{lemma}\label{lem:SGdual}
Let $\mathcal{L}$ be a finite set of lines in $\R\PP^2$, not all concurrent, and let $L$ be a line not in $\mathcal{L}$.
Then there is a point in $\R\PP^2$, not on $L$, that is contained in exactly two of the lines in $\mathcal{L}$.
\end{lemma}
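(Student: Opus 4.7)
The plan is to pass to the projective dual. Under point--line duality in $\R\PP^2$, the set of lines $\mathcal{L}$ corresponds to a finite set $P$ of points, the hypothesis that $\mathcal{L}$ is not all concurrent becomes the statement that $P$ is not all collinear, and the distinguished line $L \notin \mathcal{L}$ corresponds to a point $q \notin P$. The conclusion we want --- a point not on $L$ lying on exactly two lines of $\mathcal{L}$ --- dualizes to an \emph{ordinary} line of $P$ (a line containing exactly two points of $P$) that does not pass through $q$.

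To produce such a line, I would begin by applying Theorem~\ref{thm:SG} to $P$, obtaining at least one ordinary line; in fact Melchior's inequality gives at least three. If any of these ordinary lines avoids $q$, we are done. The only remaining case is that every ordinary line of $P$ passes through the single point $q$. To rule this out, fix two ordinary lines $\ell_1, \ell_2$ through $q$, with $\ell_i \cap P = \{a_i, b_i\}$; since $\ell_1 \cap \ell_2 = \{q\}$ and $q \notin P$, the four points $a_1, b_1, a_2, b_2$ are distinct and lie on two different lines through $q$. Consequently none of the four cross lines $a_1 a_2$, $a_1 b_2$, $b_1 a_2$, $b_1 b_2$ passes through $q$, so by our assumption none is ordinary, and each must contain a third point of $P$. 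Combining these forced incidences with a third ordinary line $\ell_3$ through $q$ (also supplied by Melchior) yields a rigid configuration that, on closer inspection, is incompatible with $P$ being non-collinear.

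The main obstacle is making that last combinatorial collapse watertight in full generality. A likely cleaner alternative is to first apply a projective transformation sending $q$ to a point at infinity --- say, the vertical direction --- so that the problem reduces to the affine question: among finitely many non-collinear points in $\R^2$, find a non-vertical ordinary line. One then runs the Kelly--Moser minimum-distance proof of Sylvester--Gallai restricted to non-vertical lines, minimizing the Euclidean distance from a point $p \in P$ to a non-vertical line $\ell$ through at least two points of $P$ with $p \notin \ell$. Assuming no non-vertical ordinary line exists, $\ell$ contains at least three points of $P$, and a modest pigeonhole argument on those points shows that the standard Kelly--Moser shortening step can be arranged to produce a new line that is again non-vertical, contradicting minimality.
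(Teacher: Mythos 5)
The paper does not actually prove Lemma \ref{lem:SGdual}: it explicitly remarks that the required avoidance of $L$ makes the statement stronger than the literal projective dual of Theorem \ref{thm:SG}, and it defers the proof to Lenchner's thesis. Your dualization is set up correctly --- the claim is that the ordinary lines of a finite non-collinear set $P$ cannot all pass through a single point $q\notin P$ --- and you rightly identify that this extra avoidance condition is where the difficulty lies. But both of your sketches stop exactly where the real work begins. In the first, after correctly observing that the four cross lines $a_1a_2$, $a_1b_2$, $b_1a_2$, $b_1b_2$ miss $q$ and must therefore each carry a third point of $P$, you assert that adding a third ordinary line through $q$ yields a configuration ``incompatible with $P$ being non-collinear.'' No argument is given, and it is not clear how a contradiction follows from these incidences alone; that collapse is the entire content of the lemma.

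The second sketch has a concrete failure point. Send $q$ to the vertical direction at infinity and minimize $\mathrm{dist}(p,\ell)$ over $p\in P$ and \emph{non-vertical} connecting lines $\ell$ with $p\notin\ell$. If the minimizing $\ell$ carries $k\ge 3$ points of $P$, the Kelly--Moser step needs two of them, $x$ and $y$, on the same closed side of the foot $f$ of the perpendicular from $p$, with $x$ between $f$ and $y$; the closer pair produced is $(x,\,py)$. When $k=3$ and the three points split two-and-one strictly about $f$, there is exactly one admissible choice of $y$, hence exactly one candidate line $py$ --- and nothing prevents $py$ from being vertical (equivalently, from passing through $q$), since $f$ need not lie directly below $p$ when $\ell$ is slanted. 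In that case no eligible closer pair exists and the minimality argument does not close. So the ``modest pigeonhole argument'' is not modest: a genuine additional idea is required for this case, and the proposal as written is not a proof. Either supply that missing case analysis or do as the paper does and cite Lenchner for this lemma.
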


Note that this statement is not exactly the dual of Theorem \ref{thm:SG},
because of the required avoidance of the line $L$.
See Lenchner \cite[Chapter 2]{L} for a proof and discussion of this statement.

The other fact we will use is the following statement proved by Boys, Valculescu, and De Zeeuw \cite[Lemma 2.1]{BVZ}.

\begin{lemma}\label{lines_few_points}
If $A$ is a finite set in $\R\PP^2$ that is not contained in a line, then there is a point $x \in A$ that is contained in two lines, each with exactly two or three points of $A$.
\end{lemma}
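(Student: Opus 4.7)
I argue by contradiction: suppose every $x \in A$ lies on at most one \emph{short} line, where a short line means one meeting $A$ in exactly $2$ or $3$ points. Small cases $|A| \le 4$ are checked by inspection. If some line contains $|A|-1$ points of $A$ (a near-pencil), the single off-point forms $|A|-1 \ge 2$ distinct $2$-point lines with the points on the large line, contradicting the hypothesis; so I assume $|A| \ge 5$ and no line carries $|A|-1$ points of $A$. A quick consequence of the hypothesis is the bound $d_x \le (|A|+1)/3$ on the number of spanned lines through any $x \in A$, since at most one such line is short and each of the others carries $\ge 4$ points.

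The plan is to iteratively apply Sylvester--Gallai (Theorem~\ref{thm:SG}) to produce pairwise vertex-disjoint short lines of $A$. Sylvester--Gallai gives an initial $2$-point line $\ell_0 = \{p_0,q_0\}$. The set $A_1 := A \setminus \{p_0, q_0\}$ is not collinear: if it lay on some line $M$, then (since near-pencils are excluded) both $p_0$ and $q_0$ lie off $M$, and then the $|A|-2 \ge 3$ lines joining $p_0$ to the points of $A_1 \subset M$ are all $2$-point lines of $A$, forcing $p_0$ onto more than one short line. So Sylvester--Gallai applied to $A_1$ yields a $2$-point line $\ell_1'$ of $A_1$, and a case analysis shows that $\ell_1' \cap A = \{p_1, q_1\}$ is a $2$-point line of $A$ with $\{p_1,q_1\} \cap \{p_0,q_0\} = \emptyset$: if $|\ell_1' \cap A|$ were $3$, the extra point would be $p_0$ or $q_0$, giving a $3$-point short line through that point and violating the hypothesis; and $|\ell_1' \cap A| = 4$ is impossible because $\ell_1'$ would then have to coincide with the unique line through $p_0$ and $q_0$, namely $\ell_0$, which has only $2$ points of $A$.

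Iterating---at step $j$ set $A_j := A \setminus \bigcup_{i<j} \ell_i$ and apply Sylvester--Gallai---I aim to produce pairwise vertex-disjoint $2$-point lines $\ell_0,\ldots,\ell_m$ of $A$ for $m$ as large as roughly $\lfloor (|A|-2)/2 \rfloor$. Combining the resulting abundance of $2$-point lines with Melchior's inequality and the bound $d_x \le (|A|+1)/3$ should then force the desired contradiction. The main obstacle, and the step requiring the most care, is this: for $j \ge 2$ the Sylvester--Gallai line $\ell_j'$ of $A_j$ may contain two or more previously removed endpoints, in which case it is a \emph{long} line of $A$ (with $\ge 4$ points), no contradiction with the hypothesis arises, and no new $2$-point line of $A$ is extracted. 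Handling this cleanly requires showing that among the $\ge 3$ ordinary lines of $A_j$ guaranteed by Melchior's inequality, at least one is free of previously removed endpoints and hence is a $2$-point line of $A$. If not, every ordinary line of $A_j$ meets the removed set in $\ge 2$ points, and the tight incidence pattern this forces---combined with the pairwise vertex-disjointness of the previously constructed short lines---should lead to a final contradiction.
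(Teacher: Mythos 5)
First, a point of reference: the paper does not prove this lemma at all --- it is quoted verbatim from Boys, Valculescu, and De Zeeuw \cite[Lemma 2.1]{BVZ} --- so there is no in-paper argument to compare yours against. Judged on its own, your proposal is a plan rather than a proof, and the two steps on which everything hinges are exactly the ones you leave open. The preliminary reductions are fine: the near-pencil case, the degree bound $d_x \le (|A|+1)/3$, and the first iteration step producing a second $2$-point line of $A$ disjoint from $\ell_0$ all check out. But from $j \ge 2$ onward you only state that one ``should'' be able to find an ordinary line of $A_j$ avoiding all previously removed endpoints. This is the crux of your approach: an ordinary line of $A_j$ through two removed points is a line with at least four points of $A$ and yields nothing, and nothing you have written rules out that \emph{every} ordinary line of $A_j$ is of this type. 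Melchior's guarantee of three ordinary lines for $A_j$ does not obviously suffice, since the removed set grows linearly in $j$ while the number of guaranteed ordinary lines stays at three.

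Second, even granting the iteration, the endgame is not established. Pairwise vertex-disjoint $2$-point lines are perfectly consistent with your contradiction hypothesis --- that is precisely what ``at most one short line per point'' permits --- so the contradiction must come from a counting argument, and you only assert that Melchior's inequality ``should then force'' it. There is a real argument in that direction: the hypothesis gives $2t_2+3t_3\le |A|$, Melchior's inequality gives $\sum_{k\ge 4}(k-3)t_k\le t_2-3$, and comparing with $\sum_k \binom{k}{2} t_k=\binom{|A|}{2}$ does yield a contradiction for large $|A|$. But that computation must actually be written out, it appears not to need your iteration at all (which suggests the iteration is a detour), and it fails for small $|A|$, where the lemma is still claimed to hold. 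As it stands, the proposal correctly identifies the difficulties but does not resolve them.
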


\newpage

\section{Ordinary conics revisited}

In this section we give our proof of Theorem \ref{thm:WW}, which will be the model for the proof of our main theorem on ordinary cubics in the next section.

For convenience we record the relevant consequences of Lemma \ref{lem:eisenbud}.

\begin{lemma}\label{lem:conicdimensions}
\verb% %\\
$(a)$ If $B\subset \R\PP^2$ consists of three points, 
then $\mathcal{P}^2_B$, the parameter space of conics containing $B$, has dimension $2$.\\ 
$(b)$ If $B\subset \R\PP^2$ consists of four points, not on one line, then $\mathcal{P}^2_B$ is has dimension $1$.\\
$(c)$ If $B\subset \R\PP^2$ consists of five points, 
no four on a line, then $\mathcal{P}^2_B$ has dimension $0$.
\end{lemma}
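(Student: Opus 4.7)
The plan is to derive all three parts as direct corollaries of Lemma \ref{lem:eisenbud} applied with $d=2$. In that lemma, the ``expected dimension'' is $d(d+3)/2-|A| = 5-|A|$, which for $|B|=3,4,5$ gives $2,1,0$ respectively; these are exactly the dimensions asserted in (a), (b), (c). Since each of our sets $B$ satisfies $|B|\leq 6 = 2d+2$, Lemma \ref{lem:eisenbud} applies and tells us that the parameter space $\mathcal{P}^2_B$ has the expected dimension unless one of two obstructions occurs: either $d+2=4$ points of $B$ are collinear, or $|B|=2d+2=6$ and $B$ lies on a conic.

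The next step is just to verify, case by case, that neither obstruction can occur under the given hypotheses. For (a), $|B|=3<4$, so four of the points cannot be collinear, and $|B|=3\neq 6$. For (b), the assumption that the four points are not on one line rules out four collinear points, and again $|B|=4\neq 6$. For (c), the assumption that no four points lie on a line rules out the first obstruction, and $|B|=5\neq 6$ rules out the second. In every case Lemma \ref{lem:eisenbud} forces $\mathcal{P}^2_B$ to have the expected dimension.

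There is no real obstacle here; the content is entirely in Lemma \ref{lem:eisenbud}, and the only thing to check is that the hypotheses of each part were stated precisely to avoid its two exceptional configurations. I would simply write the proof as a short paragraph invoking the lemma and dispatching the three cases in one sentence each.
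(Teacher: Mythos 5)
Your proposal is correct and matches the paper exactly: the paper states this lemma as ``the relevant consequences of Lemma \ref{lem:eisenbud}'' with $d=2$, and your case-by-case check that neither exceptional configuration (four collinear points, or six points on a conic) can occur under the stated hypotheses is precisely the intended argument.
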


Given the finite point set $A\subset\R\PP^2$, we will choose a non-collinear three-point set $B\subset A$ in a specific way, and use it to construct a map to lines in the parameter space of conics containing $B$, as follows.
For a point $x \in \R\PP^2 \backslash B$, 
we consider $B\cup\{x\}$, which is a non-collinear four-point set.
By Lemma \ref{lem:conicdimensions}$(b)$,
$\mathcal{P}^2_{B\cup\{x\}}$ is a line, which we can consider as a line in the plane $\mathcal{P}^2_B$.
Thus we can define a map $\varphi^2_B$ that takes a point $x \in \R\PP^2 \backslash B$ to the line $\mathcal{P}^2_{B\cup\{x\}}$ in the plane $\mathcal{P}^2_B$;
i.e.
\[\varphi^2_B\left(x\right) = \mathcal{P}^2_{B\cup\{x\}}.\]

In our proof it will be crucial to know which lines in $\mathcal{P}^2_B$ have unique preimages under $\varphi^2_B$ and which have multiple preimages.
The following lemma shows that $\varphi^2_B$ is injective except on the lines spanned by $B$.

\begin{lemma}\label{lem:conicpreimages}
Let $B\subset \R\PP^2$ be a set of three non-collinear points. If $x, y \in \R\PP^2 \backslash B$ satisfy $\varphi^2_B(x) = \varphi^2_B(y)$ then $x$ and $y$ lie on a line through two points of $B$.
\end{lemma}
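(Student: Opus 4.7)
The plan is a short dimension count. I would interpret the hypothesis $\varphi^2_B(x) = \varphi^2_B(y)$ as a statement about the parameter space $\mathcal{P}^2_{B \cup \{x,y\}}$, compute that its dimension exceeds the expected one, and then invoke Lemma \ref{lem:eisenbud} to pin down the forced collinearity. I tacitly assume $x \ne y$, since the conclusion is content-free otherwise.

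First I would unpack the hypothesis. The equality $\varphi^2_B(x) = \varphi^2_B(y)$ means that $\mathcal{P}^2_{B \cup \{x\}}$ and $\mathcal{P}^2_{B \cup \{y\}}$ coincide as lines in the plane $\mathcal{P}^2_B$; equivalently, every conic through $B \cup \{x\}$ already passes through $y$. Consequently $\mathcal{P}^2_{B \cup \{x, y\}} = \mathcal{P}^2_{B \cup \{x\}}$, which by Lemma \ref{lem:conicdimensions}$(b)$ has dimension $1$ (note that $B \cup \{x\}$ is non-collinear because $B$ is).

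Next I would compare with the expected dimension. The set $A := B \cup \{x, y\}$ has five points, so for $d = 2$ the expected dimension $d(d+3)/2 - |A|$ equals $0$. Since the actual dimension is $1$, the set $A$ fails to impose independent conditions on conics. Because $|A| = 5 \le 2d + 2 = 6$, Lemma \ref{lem:eisenbud} applies; and because $|A| \ne 6$, the ``six points on a conic'' alternative is ruled out. Therefore $d + 2 = 4$ of the five points of $A$ must be collinear.

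Finally I would identify those four collinear points. They cannot all lie in $B$ (which has only three points), and they cannot form $B \cup \{x\}$ or $B \cup \{y\}$ (since $B$ is non-collinear). The only remaining possibility is that the four collinear points consist of $x$, $y$, and two points of $B$, giving exactly the claimed conclusion. There is no real obstacle in this argument; the only point requiring care is verifying that Lemma \ref{lem:eisenbud} excludes every configuration except the one we want, which the size and non-collinearity constraints handle immediately.
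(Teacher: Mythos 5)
Your proof is correct and follows essentially the same route as the paper: translate $\varphi^2_B(x)=\varphi^2_B(y)$ into the statement that $\mathcal{P}^2_{B\cup\{x,y\}}$ is one-dimensional rather than zero-dimensional, and then conclude that four of the five points are collinear, which forces those four to be $x$, $y$, and two points of $B$. The only cosmetic difference is that you cite Lemma \ref{lem:eisenbud} directly where the paper uses its packaged consequence, Lemma \ref{lem:conicdimensions}$(c)$, and you spell out the final identification of the collinear quadruple, which the paper leaves implicit.
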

\begin{proof}
We have $\varphi^2_B(x) = \varphi^2_B(y)$ when $S^2_{B\cup\{x\}} = S^2_{B\cup\{y\}}$, which occurs precisely when the intersection $S^2_{B\cup\{x\}} \cap S^2_{B\cup\{y\}} = S^2_{B\cup\{x,y\}}$ is two-dimensional,
so $\mathcal{P}^2_{B\cup\{x,y\}}$ is one-dimensional. 
By Lemma \ref{lem:conicdimensions}$(c)$, this occurs only when $B\cup\{x,y\}$ has some four points collinear. 
\end{proof}

Now we show that $B$ can be chosen so that the image of $A\backslash B$ under $\varphi^2_B$ contains only one line with multiple preimages, while the other lines are not concurrent. This will allow us to apply the dual Sylvester--Gallai theorem to the set of lines $\varphi^2_B(A\backslash B)$.

\begin{lemma}\label{lem:goodBforconics}
Let $A \subset \R\PP^2$ be a finite set of points that is not contained in a conic. 
Then there is a three-point set $B \subset A$ such that $\varphi^2_B(A\backslash B)$ contains at most one line with multiple preimages in $A\backslash B$, and the remaining lines in $\varphi^2_B(A\backslash B)$ are not all concurrent.
\end{lemma}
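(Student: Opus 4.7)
The plan is to apply Lemma~\ref{lines_few_points} to find a point $x_0 \in A$ lying on two distinct lines $\ell_1, \ell_2$ each containing at most $3$ points of $A$. I then pick $y \in (\ell_1 \cap A) \setminus \{x_0\}$ and $z \in (\ell_2 \cap A) \setminus \{x_0\}$ and set $B = \{x_0, y, z\}$; since $\ell_1$ and $\ell_2$ meet only at $x_0$, this $B$ is non-collinear. The three lines spanned by pairs of $B$ are $\ell_1 = \overline{x_0 y}$, $\ell_2 = \overline{x_0 z}$, and $\ell_3 := \overline{y z}$. By Lemma~\ref{lem:conicpreimages}, a line in $\varphi^2_B(A\setminus B)$ with multiple preimages corresponds to one of these three lines containing at least $4$ points of $A$; since $|\ell_1 \cap A|, |\ell_2 \cap A| \leq 3$, only $\ell_3$ can qualify, which yields the first condition.

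For the second condition, suppose for contradiction that all the remaining lines of $\varphi^2_B(A\setminus B)$ pass through some common $P \in \mathcal{P}^2_B$. The conic $C$ represented by $P$ must contain $B$ together with every point of $A \setminus B$ not on $\ell_3$. Since $A$ is not contained in any conic, some $p \in A \cap \ell_3 \setminus B$ lies off $C$, forcing $\ell_3 \not\subseteq C$, $C \cap \ell_3 = \{y, z\}$, and the covering $A \subseteq C \cup \ell_3$ with $C$ a conic through $\{x_0, y, z\}$ not containing $\ell_3$. This in particular forces $|\ell_3 \cap A| \geq 4$: otherwise $\ell_3$ would not be a multi-preimage line, and the concurrency argument would give $C \supseteq A$, contradicting the hypothesis.

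To rule out this covering, I exploit the freedom in choosing $(y, z)$. Whenever $|\ell_1 \cap A| = 3$ and $\ell_1 \cap A = \{x_0, y, y'\}$, the third point $y'$ cannot lie on $\ell_3$ (else $\ell_3 = \ell_1$, contradicting $z \notin \ell_1$), so $y' \in C$, and the three collinear points $x_0, y, y'$ of $C$ force $\ell_1 \subseteq C$; analogously $\ell_2 \subseteq C$ when $|\ell_2 \cap A| = 3$. In the main case $|\ell_1 \cap A| = |\ell_2 \cap A| = 3$, this forces $C = \ell_1 \cup \ell_2$ for every one of the four choices $(y, z) \in \{y_1, y_2\} \times \{z_1, z_2\}$; intersecting the four resulting coverings $A \subseteq \ell_1 \cup \ell_2 \cup \overline{y_i z_j}$ yields $A \subseteq \ell_1 \cup \ell_2 \cup \bigcap_{i,j} \overline{y_i z_j} = \ell_1 \cup \ell_2$, since the two lines through $y_1$ intersect only in $\{y_1\}$ while the two through $y_2$ intersect only in $\{y_2\}$. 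This contradicts the hypothesis, so at least one of the four choices produces a valid $B$.

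The intermediate cases with exactly one of $|\ell_i \cap A| = 2$ are handled by the analogous two-choice variant of the intersection argument. The main obstacle is the degenerate subcase $|\ell_1 \cap A| = |\ell_2 \cap A| = 2$, where $B$ is forced; however, the observation above shows that condition~2 failure then forces $|\ell_3 \cap A| \geq 4$, so $\ell_3$ is a rich line providing further candidate base points, and reapplying Lemma~\ref{lines_few_points} (to $A \setminus \ell_3$ or a similar substructure) produces a new base point whose two thin lines include one of size at least $3$, reducing to the previously treated non-degenerate case.
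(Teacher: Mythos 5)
Your reduction to the single candidate bad line $\ell_3=\overline{yz}$ is correct, and your treatment of the main case $|\ell_1\cap A|=|\ell_2\cap A|=3$ is a genuinely different finish from the paper's (which, upon concurrency, analyzes the resulting conic $C$ and rechooses $B$ from scratch on the cubic $L\cup C$): there, a failure for the choice $(y_i,z_j)$ forces $C=\ell_1\cup\ell_2$, and intersecting the four coverings $A\subseteq\ell_1\cup\ell_2\cup\overline{y_iz_j}$ does yield $A\subseteq\ell_1\cup\ell_2$, a contradiction. That part is fine.

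The gap is in the last paragraph: the two remaining cases are not actually analogous, and the first of them fails. Suppose $\ell_1\cap A=\{x_0,y_1,y_2\}$ and $|\ell_2\cap A|=2$, so $z$ is forced and you only have two choices. A failure for $(y_i,z)$ still forces $\ell_1\subseteq C_i$, hence $C_i=\ell_1\cup m_i$ with $z\in m_i$, but the second component $m_i$ is \emph{not} pinned down the way $\ell_2$ was in the main case. Taking $m_1=\overline{y_2z}$ and $m_2=\overline{y_1z}$, the intersection of the two coverings is $\ell_1\cup\overline{y_1z}\cup\overline{y_2z}$, a cubic rather than a conic, so no contradiction results. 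This configuration is realizable: place three points of $A$ on $\ell_1$, several points of $A$ on each of $\overline{y_1z}$ and $\overline{y_2z}$, and nothing else; then $A$ lies on no conic, $x_0$ with $(\ell_1,\overline{x_0z})$ is a legitimate output of Lemma \ref{lines_few_points}, and both choices of $B$ genuinely fail. The fully degenerate case $|\ell_1\cap A|=|\ell_2\cap A|=2$ is likewise not repaired by reapplying Lemma \ref{lines_few_points} to $A\setminus\ell_3$: that controls the number of points of $A\setminus\ell_3$ on the two new lines, but each such line can pick up one further point of $A$ on $\ell_3$ and so carry four points of $A$, producing a second multi-preimage line. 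To close these cases you need the paper's second step: once concurrency forces $A\subseteq\ell_3\cup C$, abandon $x_0$ altogether and choose a fresh $B'$ adapted to the components of the cubic $\ell_3\cup C$ (one point on $\ell_3$ off $C$, and two points of $C$ chosen on distinct lines when $C$ is reducible), so that every line spanned by two points of $B'$ meets this cubic, and hence $A$, in at most three points.
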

\begin{proof}
By Lemma \ref{lines_few_points} we can choose a point $x_0\in A$ that lies on two lines $L_1,L_2$ that each contain two or three points of $A$.
Let $x_1$ be a point of $A$ on $L_1$,
$x_2$ a point of $A$ on $L_2$, 
and let $L$ be the line through $x_1$ and $x_2$. 
We choose $B = \{x_0,x_1,x_2\}$.
The lines $L_1,L_2$ each contain at most one point of $A\backslash B$, and $L$ can have any number of points from $A\backslash B$.
By Lemma \ref{lem:conicpreimages}, this implies that $\varphi^2_B(A\backslash B)$ has at most one line with multiple preimages; specifically, each point of $A\backslash B$ on $L$ is mapped to the same line, while every other point of $A\backslash B$ is mapped to a distinct line.

If $L$ contains at most one point of $A\backslash B$, 
then no line in $\varphi^2_B(A\backslash B)$ has multiple preimages in $A\backslash B$.
Then the lines in $\varphi^2_B(A\backslash B)$ could not be concurrent at some point in $\mathcal{P}^2_B$,
because that point would correspond to a conic that contains all of $A$.
Thus in this case $B$ is as required.

Suppose that $L$ contains more than one point of $A\backslash B$.
If the lines in $\varphi^2_B(A\backslash B)$ with a unique preimage are not all concurrent, then $B$ is as required.
Suppose the lines in $\varphi^2_B(A\backslash B)$ with a unique preimage are concurrent. 
Then the point of concurrency corresponds to a conic $C$ that contains $B$ and all the points of $A\backslash B$ outside $L$, 
which means that $A$ is contained in the cubic curve $L\cup C$.

First suppose that $C$ is irreducible.
We choose a new set 
$B' = \{y_0, y_1, y_2\}\subset A$ such that $y_0\in L\backslash C$ and $\{y_1,y_2\} \subset C \backslash L$. 
Then any line through two points of $B'$ contains at most three points of $A$, since $A$ lies on the cubic $L\cup C$, and a line intersects a cubic in at most three points, unless it is a component.
The latter is not possible, because that component would have to be $L$, and we chose $B'$ with only one point on $L$. 
So none of the lines in $\varphi^2_{B'}(A\backslash B')$ has multiple preimages, 
and they are not all concurrent since $A$ is not contained in a conic.
Thus $B'$ is as required.

Finally, suppose that $C$ is reducible, so it is a union of two lines $L_1$ and $L_2$ different from $L$.
We choose $B' = \{y_0, y_1, y_2\}$ again with $y_0\in L\backslash C$, 
but now with $y_1$ on $L_1\backslash (L\cup L_2)$ and $y_2$ on $L_2\backslash (L\cup L_1)$.
As above, a line through two points of $B'$ contains at most three points of $A$,
for the same reason, except that now we should also consider the possibility that the line through two points of $B'$ is a component of $C$, i.e. that it equals $L_1$ or $L_2$.
Because we chose $B'$ so that no two points of $B'$ lie on $L_1$ or on $L_2$, this possibility is excluded.
This completes the proof.
\end{proof}

With these preparations the proof of Theorem \ref{thm:WW} is straightforward.

\begin{proof}
Lemma \ref{lem:goodBforconics} gives us a three-point set $B \subset A$ such that $\varphi^2_B(A\backslash B)$ is a set of lines, out of which at most one line has multiple preimages in $A\backslash B$, while the remaining lines are not all concurrent. 
This allows us to apply Lemma \ref{lem:SGdual}, 
which gives us a point $z \in \mathcal{P}^2_B$ with exactly two lines $\varphi^2_B(x), \varphi^2_B(y) \in \varphi^2_B(A\backslash B)$ passing through it, neither of which have multiple preimages. 
The conic  corresponding to $z$ passes through $B\cup\{x,y\}$ and no other points of $A$, so it is an ordinary conic.
\end{proof}

\section{Ordinary cubics}\label{sec:ordinary_cubics}

We follow a similar outline to our proof for conics in the previous section.
First we record the relevant consequences of Lemma \ref{lem:eisenbud}. 

\begin{lemma}\label{lem:cubicdimensions}
\verb% %\\
$(a)$ If $B\subset \R\PP^2$ consists of seven points with at most four collinear, then $\mathcal{P}^3_B$, the parameter space of cubics containing $B$, has dimension $2$.\\
$(b)$ If $B\subset \R\PP^2$ consists of eight points with at most four collinear and at most seven co-conic, 
then $\mathcal{P}^3_B$ has dimension $1$.
\end{lemma}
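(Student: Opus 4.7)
The plan is to derive both parts directly from Lemma~\ref{lem:eisenbud} with $d=3$, since the statement is essentially just the relevant specialization. The first step is to set up the expected dimensions: the polynomial space $\R[x,y,z]_3$ has dimension $d(d+3)/2+1 = 10$, so a set $B$ of $k$ points imposing independent conditions gives $\mathcal{P}^3_B$ the projective dimension $9-k$. For (a) with $k=7$ this is $2$, and for (b) with $k=8$ this is $1$, matching the claimed values. Both cases satisfy $|B|\le 2d+2 = 8$, so Lemma~\ref{lem:eisenbud} applies.

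The second step is to check that the hypotheses of each part rule out the two obstructions to independent conditions named in Lemma~\ref{lem:eisenbud}, namely having $d+2 = 5$ collinear points, or having exactly $2d+2 = 8$ points lying on a conic. In part (a), $|B| = 7 < 8$ makes the second obstruction vacuous, and the assumption ``at most four collinear'' rules out the first. In part (b), $|B| = 8$ activates the second obstruction as well, so we need both ``at most four collinear'' (against the first) and ``at most seven co-conic'' (against the second). In each case every failure mode allowed by Lemma~\ref{lem:eisenbud} is excluded, so the parameter space has the expected dimension.

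There is essentially no obstacle: the real content lies in Lemma~\ref{lem:eisenbud}, and this lemma is just translation. The only small bookkeeping point is that the co-conic condition is only triggered at the maximal size $|B| = 2d+2$, which is why part (a) needs no hypothesis on conics while part (b) does.
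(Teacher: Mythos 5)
Your proof is correct and is exactly the intended argument: the paper states this lemma without proof as a direct specialization of Lemma~\ref{lem:eisenbud} to $d=3$, and your verification that the two obstructions ($5$ collinear points, or $8$ points on a conic) are excluded by the hypotheses is precisely the required check.
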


We require one more statement of this kind, which does not follow from Lemma \ref{lem:eisenbud} and requires some more work.

\begin{lemma}\label{lem:tenpoints}
If $B\subset \R\PP^2$ is a set of ten points such that $\mathcal{P}^3_B$ has dimension $1$, 
then $B$ has either six points on a line or nine points on a conic.
\end{lemma}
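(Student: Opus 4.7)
My approach is to apply B\'ezout's theorem to two cubics spanning the pencil of cubics through $B$. Since $\mathcal{P}^3_B$ has projective dimension $1$, the vector space $S^3_B$ is two-dimensional, so I pick linearly independent cubics $F_1, F_2$ spanning it. Let $D$ be a greatest common divisor of $F_1$ and $F_2$ in $\R[x,y,z]$, and write $F_i = D \cdot G_i$ with $G_1, G_2$ sharing no common factor. Linear independence of $F_1, F_2$ forces $\deg D \le 2$. On the other hand, every point of $B$ is a common zero of $F_1$ and $F_2$, so if $\deg D = 0$ then B\'ezout would give at most $9$ common zeros, contradicting $|B| = 10$. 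Hence $\deg D \in \{1, 2\}$, every cubic in the pencil is divisible by $D$, and every point of $B$ either lies on the curve $\{D = 0\}$ or is a common zero of $G_1$ and $G_2$.

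If $\deg D = 1$, then $D = L$ is a line and $G_1, G_2$ are conics sharing no common factor; by B\'ezout they have at most $4$ common zeros in $\R\PP^2$. Thus at most $4$ points of $B$ lie off $L$, leaving at least $6$ points of $B$ on $L$, which is the first conclusion.

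If $\deg D = 2$, then $D = Q$ is a (possibly reducible) conic and $G_1, G_2$ are distinct lines, meeting in a single point $q \in \R\PP^2$. Hence at most one point of $B$ lies off $Q$, and at least $9$ points of $B$ lie on the conic $Q$, which is the second conclusion. The only subtlety is the $\gcd$ reduction, which is what allows B\'ezout to deliver the sharp bounds $4$ and $1$ on the intersection of the cofactors $G_1, G_2$; without it one would worry about cubics that are highly non-reduced (for instance of the form $L^2 M$), but these are automatically absorbed into $D$.
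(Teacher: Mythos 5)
Your proof is correct, and while it opens exactly as the paper's does --- $S^3_B$ is two-dimensional, so there are two independent cubics through $B$, and B\'ezout forces them to share a component --- the execution is genuinely different and cleaner. The paper fixes one decomposition of $B$ into a line $L$ and a conic $C$ and then runs a three-way case analysis (irreducible $C$ with at least seven points, irreducible $C$ with fewer, reducible $C$), repeatedly using B\'ezout to force every cubic in the pencil to contain $L$ or $C$ and then counting which residual lines or conics can pick up the leftover points. Your gcd decomposition $F_i = D\cdot G_i$ shortcuts all of this: coprimality of the cofactors is guaranteed by construction, so B\'ezout applied to $G_1,G_2$ immediately bounds the number of points of $B$ off $\{D=0\}$ by $4$ (two coprime conics) or $1$ (two distinct lines), and the two conclusions of the lemma fall out of the two possible values of $\deg D$. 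What the paper's longer route buys is a more explicit picture of which configurations actually occur (e.g.\ that in the three-lines case one gets $|L_1\cap B|\ge 5$); what yours buys is brevity, sharper intermediate bounds, and --- as you note --- automatic handling of non-reduced cubics such as $L^2M$, which the paper's phrasing ``they must share a component'' glosses over slightly. One small remark: in the case $\deg D=2$ the curve $\{D=0\}$ is the zero set of a possibly reducible quadratic form, but since the paper explicitly allows reducible conics, the conclusion ``nine points on a conic'' is delivered exactly as stated.
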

\begin{proof}
If $\mathcal{P}^3_B$ is one-dimensional,
then $S^3_B$ is a two-dimensional vector space,
 so we can find two linearly independent polynomials in it,
 which gives us two distinct cubics containing $B$. 
Since they intersect in ten points, by B\'ezout's inequality they must share a component, as otherwise they could only intersect in nine points. 
That component may be a line or a conic, 
but either way we can conclude that $B$ lies on the union of a line $L$ and a conic $C$. 
We distinguish three cases.

First suppose that $C$ is irreducible and passes through at least seven points of $B$. 
Then any cubic through $B$ must contain all of $C$ by B\'ezout's inequality. Because there must be two distinct cubics containing $B$ there must be at least two lines passing through $B \backslash C$, and so there is only one point on $L$. Thus there are at least nine points on the conic $C$. 

Next suppose that $C$ is irreducible and passes through less than seven points of $B$. Then $L$ must contain at least four points, and so any cubic containing $B$ must contain all of $L$ by B\'ezout's inequality. But because there are at least two cubics containing $B$ there must be at least two conics containing $B \backslash L$; as we assumed $C$ is irreducible, it contains at most four points. So the line $L$ passes through at least six points.

Finally, suppose that $C$ is reducible. 
Because $C$ is a conic, it must be a union of two lines, so $B$ lies on three lines $L_1, L_2, L_3$. Suppose $|L_1 \cap B| \geq |L_2 \cap B| \geq |L_3\cap B|$. 
Then $|L_1\cap B| \geq 4$, so by B\'ezout's inequality any cubic passing through $B$ contains $L_1$ as a component. 
Since there are two distinct cubics containing $B$ there must be two distinct conics that contain $B\backslash L_1$.
This is only possible if one of the lines $L_2, L_3$ contains only one point of $B\backslash L_1$,
since otherwise $L_2\cup L_3$ would be the only conic containing $B \backslash L_1$.
Thus we have $|L_3 \cap B| \leq 1$, which implies $|L_1 \cap B| \geq 5$.
Then either $L_1$ is a line with at least six points,
or we have $|L_1 \cap B| = 5$ and $|L_2 \cap B| = 4$, which means that $L_1\cup L_2$ is a conic with nine points.
\end{proof}

We now define a map to the parameter space similar to that in the previous section.
Given a finite point set $A\subset \R\PP^2$,
we will find a seven-point set $B \subset A$ with at most three points collinear and at most six points co-conic. 
By Lemma \ref{lem:cubicdimensions}, 
$\mathcal{P}^3_B$ is then a plane,
and for any point $x\in \R\PP^2 \backslash B$,
the set $\mathcal{P}^3_{B\cup\{x\}}$ is a line in that plane.
We define a map $\varphi^3_B$ that takes a point $x \in \R\PP^2 \backslash B$ to the line $\mathcal{P}^3_{B\cup\{x\}}$ in the plane $\mathcal{P}^3_B$;
i.e.
\[\varphi^3_B\left(x\right) = \mathcal{P}^3_{B\cup\{x\}}.\]



The following lemma is an analogue of Lemma \ref{lem:conicpreimages}.
However, instead of lines with two preimages, here we analyze lines with three or more preimages.
The reason is that a line with exactly two preimages happens to easily give an ordinary cubic in our proof.
This allows us to focus on lines with three or more preimages, which are easier to characterize.

\begin{lemma}\label{lem:triplepreimage}
Let $B\subset \R\PP^2$ be a set of seven points with at most three points collinear and at most six points co-conic.
Let $x, y, z \in \R\PP^2 \backslash B$ be three points with $\varphi^3_B(x) = \varphi^3_B(y) = \varphi^3_B(z)$. 
Then either $x,y,z$ lie on a line through three points of $B$, 
or $x,y,z$ lie on a conic through six points of $B$.
\end{lemma}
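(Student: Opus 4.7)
The plan is to mimic the structure of Lemma \ref{lem:conicpreimages}, but with an extra point and an application of Lemma \ref{lem:tenpoints} playing the role of Lemma \ref{lem:conicdimensions}$(c)$. The hypothesis $\varphi^3_B(x)=\varphi^3_B(y)=\varphi^3_B(z)$ unpacks to $S^3_{B\cup\{x\}}=S^3_{B\cup\{y\}}=S^3_{B\cup\{z\}}$; since each of these is a two-dimensional subspace of the three-dimensional $S^3_B$, their common value must equal the intersection $S^3_{B\cup\{x,y,z\}}$, which is then itself two-dimensional. Hence $\mathcal{P}^3_{B\cup\{x,y,z\}}$ has projective dimension $1$.

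Now the ten-point set $B\cup\{x,y,z\}$ satisfies the hypothesis of Lemma \ref{lem:tenpoints}, so either six of its points lie on a common line $L$, or nine of its points lie on a common conic $C$. From here the argument splits into two short cases, each exploiting the combinatorial assumptions on $B$.

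In the first case, $L$ contains at most three points of $B$ by assumption, so it must contain at least $6-3=3$ points of $\{x,y,z\}$, i.e.\ all of $x,y,z$; and then $L$ contains at least three (hence exactly three) points of $B$, delivering the first conclusion. In the second case, $C$ contains at most six points of $B$ by assumption, so it must contain at least $9-6=3$ points of $\{x,y,z\}$, i.e.\ all of $x,y,z$; and then $C$ contains at least six (hence exactly six) points of $B$, delivering the second conclusion.

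The only nontrivial step is the reduction to Lemma \ref{lem:tenpoints}; once that is in hand, the case analysis is immediate from the pigeonhole bounds $6-3$ and $9-6$. I do not anticipate a serious obstacle, since the heavy lifting on ten-point configurations has already been carried out in Lemma \ref{lem:tenpoints} via B\'ezout's inequality.
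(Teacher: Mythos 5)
Your proposal is correct and follows essentially the same route as the paper's own proof: reduce via the dimension count to the hypothesis of Lemma \ref{lem:tenpoints}, then use the assumptions that $B$ has at most three collinear and at most six co-conic points to force all of $x,y,z$ onto the line or conic. The pigeonhole step you spell out is exactly the (slightly more terse) final step of the paper's argument.
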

\begin{proof}
By Lemma \ref{lem:cubicdimensions}, 
the assumptions on $B$ imply that $S_B$ is three-dimensional, and each of $S_{B\cup\{x\}}$, $S_{B\cup\{y\}}$, $S_{B\cup\{z\}}$ is two-dimensional.
The assumption that $\varphi^3_B(x) = \varphi^3_B(y) = \varphi^3_B(z)$ implies that 
$S_{B\cup\{x\}} = S_{B\cup\{y\}} = S_{B\cup\{z\}}$,
so $S_{B\cup\{x,y,z\}}$ is a two-dimensional vector space,
and $\mathcal{P}^3_{B\cup\{x,y,z\}}$ has dimension $1$.
Therefore, by Lemma \ref{lem:tenpoints}, $B\cup\{x,y,z\}$ has either a line passing through six points, or a conic passing through nine points. 
In the first case, the fact that $B$ has at most three on a line implies $x,y,z$ lie on a line through three points of $B$. 
In the second case, 
the fact that $B$ has at most six on a conic implies $x,y,z$ lie on a conic through six points of $B$.
Thus we have proved the lemma.
\end{proof}


The following lemma, an analogue of Lemma \ref{lem:goodBforconics}, gives us the set $B\subset A$ which has the properties that allow us to apply the dual Sylvester--Gallai theorem to the lines in $\varphi^3_B(A\backslash B)$.

\begin{lemma}\label{lem:goodBforcubics}
Let $A \subset \R\PP^2$ be a finite set of at least $250$ points, not all lying on a cubic. 
Then there is a seven-point set $B \subset A$ 
such that at most one line in $\varphi^3_B(A \backslash B)$ has more than two preimages in $A\backslash B$,
and the remaining lines of $\varphi^3_B(A\backslash B)$ are not concurrent.
\end{lemma}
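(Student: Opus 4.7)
The plan is to adapt the strategy of Lemma \ref{lem:goodBforconics} to the seven-point setting. First, I would apply Lemma \ref{lines_few_points} to get a point $x_0 \in A$ lying on two lines $L_1, L_2$ each containing at most three points of $A$, and select $x_1 \in L_1 \cap A$, $x_2 \in L_2 \cap A$. Let $L$ be the line through $x_1$ and $x_2$. I would then extend $\{x_0, x_1, x_2\}$ to a seven-point set $B = \{x_0, \ldots, x_6\} \subset A$ by carefully choosing four additional points.

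The four additional points $x_3, \ldots, x_6$ are selected to ensure (i) at most three points of $B$ are collinear and at most six are co-conic (the hypotheses of Lemma \ref{lem:triplepreimage}), and (ii) among the lines through three points of $B$ and conics through six points of $B$, at most one contains more than two points of $A \setminus B$. I would select the four additional points greedily, at each step avoiding points of $A$ that would either create four collinear points or seven co-conic points in $B$, or create a new line through three of $B$ (respectively, conic through six of $B$) containing many points of $A$. The bound $|A| \geq 250$ is large enough that at each stage a valid choice exists, since the number of lines and conics to avoid is bounded by the combinatorics of already-chosen points.

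By Lemma \ref{lem:triplepreimage}, conditions (i) and (ii) imply that at most one line in $\varphi^3_B(A \setminus B)$ has more than two preimages. It remains to verify that the remaining lines are not all concurrent. If they were concurrent at some $z \in \mathcal{P}^3_B$, the cubic $D_z$ corresponding to $z$ would contain $B$ together with every point of $A \setminus B$ whose image is not the single allowed bad line. In the case of no bad line, $D_z$ contains $A$, contradicting the hypothesis. In the case of one bad line --- coming from a line $\ell$ through three of $B$ or a conic $C$ through six of $B$ --- we would obtain $A \subset D_z \cup \Gamma$ for $\Gamma \in \{\ell, C\}$, and by a B\'ezout argument either $\Gamma \subset D_z$ (giving a cubic through $A$, contradiction) or $\Gamma$ itself contains an unusually large portion of $A$. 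To handle the latter situation I would replace $B$ by a new seven-point set $B' \subset A$ chosen to distribute its points among the components of $D_z \cup \Gamma$, in the same spirit as the swap in Lemma \ref{lem:goodBforconics}; the new set $B'$ will then have the property that either no bad line arises at all, or concurrency is incompatible with $A$ not lying on a cubic.

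The main obstacle will be the combinatorial bookkeeping needed to produce $B$ satisfying conditions (i) and (ii): with seven points there are $\binom{7}{3} = 35$ triples to test for collinearity and $\binom{7}{6} = 7$ sextuples to test for co-conicity, and any one of these can become a bad curve with many $A$-preimages. Controlling the conic side is especially delicate because five prior points already determine a conic, so each added point is constrained to miss the unique conic through every 5-subset of prior points that carries many further points of $A$. The quantitative bound $|A| \geq 250$ is chosen precisely to make the greedy argument go through in all of these cases, as well as to provide enough slack for the swap to a new $B'$ when required.
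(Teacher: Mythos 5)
There is a genuine gap, and it sits exactly where you wave your hands: the claim that ``the bound $|A|\geq 250$ is large enough that at each stage a valid choice exists, since the number of lines and conics to avoid is bounded.'' The number of \emph{curves} to avoid is bounded, but the number of \emph{points of $A$} on those curves is not, unless you already know that no single line or conic is very rich in points of $A$. If, say, $240$ of the $250$ points lie on one line $L$ (or $230$ on one conic), a greedy selection that tries to avoid rich lines through three chosen points simply runs out of room: you cannot place seven points with at most three collinear while staying clear of $L$, and you cannot prevent $L$ itself from becoming a curve with hundreds of preimages. This is why the paper's proof is forced into a case analysis --- Case 1 (no line with $14$ points and no irreducible conic with $19$ points) is the only case where your uniform greedy argument works, and there the count $6+15\times 11+6\times 13=249$ is exactly what pins down the constant $250$. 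The remaining cases require structurally different constructions that deliberately place several points of $B$ \emph{on} the rich curve (five on a rich irreducible conic in Case 2; three or two on a rich line in Cases 3a--3c), together with extra care so that the unavoidable bad curve is unique. Your proposal does not contain this case split, and without it the selection step fails. Relatedly, your opening move via Lemma \ref{lines_few_points} (imported from the conic proof) plays no role in the cubic argument; the paper instead uses the Sylvester--Gallai theorem on $A\backslash L$ inside Case 3a.

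The concurrency-repair step is also under-specified in a way that matters. You propose to ``replace $B$ by a new seven-point set $B'$ distributed among the components of $D_z\cup\Gamma$,'' but the situation is not symmetric with the conic case: the obstructing object is now a cubic $D$ containing $A\backslash L$, not a conic, and the paper's actual repair (in Case 3a) is to re-choose only $y_5,y_6,y_7$ on $L$ avoiding the at most three points of $L\cap D$, and then to rule out a second concurrency via B\'ezout: two cubics each containing the at least $10$ points of $A\backslash L$ but meeting $L$ in disjoint triples would share no component and hence intersect in at most $9$ points, a contradiction. This is where the threshold $|A\backslash L|\geq 10$ enters and why Cases 3b and 3c (where $|A\backslash L|\leq 9$) need yet another construction. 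Your sketch gestures at ``a B\'ezout argument'' but does not identify the quantity ($|A\backslash L|\geq 10$ versus the $9$ from B\'ezout) that makes it close, nor the fallback when that quantity is unavailable.
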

\begin{proof}
We distinguish three cases, 
depending on whether or not there is a line or conic with many points. 
The case with many points on a line is separated into three subcases.

\bigskip
\noindent\textbf{Case 1:} There is no line with $14$ points of $A$ and no irreducible conic with $19$ points of $A$.

We will choose seven points $y_1,\ldots,y_7\in A$ so that no three are collinear and no six are co-conic. 
We choose $y_1,y_2$ arbitrarily.
We choose $y_3$ off the line through $y_1,y_2$, and we choose $y_4$ off of each line through two of $y_1,y_2,y_3$; 
this is possible since $A$ is not contained in a cubic. 
We choose $y_5$ off of each line through two of $y_1,y_2,y_3,y_4$; this is possible since each of these $6$ lines contains at most $13$ points, and $A$ has at least $250$ points. 

Since no three of $y_1,\ldots,y_5$ are collinear,
there is a unique conic through $y_1,\ldots,y_5$, and this conic is irreducible. 
Choose $y_6$ off of this conic and off of any line through two of $y_1,\ldots,y_5$; this is possible since the conic contains at most $18$ points and each line contains at most $13$ points.

Finally, we choose $y_7$ to avoid any line spanned by two of $y_1,\ldots,y_6$ and any conic spanned by five of $y_1,\ldots,y_6$; any such conic is irreducible.
There are $15$ such lines and $6$ such conics;
aside from the $6$ chosen points, each line contains at most $13-2=11$ more points, and each conic contains at most $18-5=13$ more points;
therefore, together these curves contain at most
$6 + 15\times 11 + 6\times 13=249$ points of $A$. 
Since $A$ contains at least $250$ points, 
we can choose $y_7$ so that 
$B = \{y_1,\ldots,y_7\}$ has no three points on a line and no six points on a conic. 
By Lemma \ref{lem:triplepreimage}, this implies that $\varphi^3_B(A\backslash B)$ has no lines with more than two preimages in $A\backslash B$.
The lines in $\varphi^3_B(A\backslash B)$ are not all concurrent since $A$ is not contained in a cubic.

\bigskip
\noindent \textbf{Case 2:} There is an irreducible conic $C \subset \R\PP^2$ passing through at least $19$ points of $A$.

We will choose $y_1,\ldots,y_7\in A$ with $y_1,y_2$ off of $C$ and $y_3,\ldots,y_7$ on $C$,
in such a way that no three are collinear and no six are co-conic.
We choose $y_1,y_2\in A\backslash C$ arbitrarily.
Then, for $i=3, \ldots, 7$, we choose $y_i \in A \cap C$ so that $y_i$ does not lie on any of the lines spanned by $y_1,\ldots, y_{i-1}$.
Moreover,
for $y_6$ we avoid the conic spanned by $y_1,\ldots, y_5$, 
and for $y_7$ we avoid the conics spanned by
any five of $y_1,\ldots, y_6$.
We set $B = \{y_1,\ldots,y_7\}$. 

We count how many points on $C$ need to be avoided when choosing $y_7$, aside from $y_3,\ldots,y_6$ (for the earlier choices the number is smaller).
For the line through $y_1$ and $y_2$ there are at most $2$ points on $C$ to avoid. For a line through two of $y_3,\ldots,y_6$ there is no further point to avoid.
For the $8$ lines through one of $y_1,y_2$ and one of $y_3,\ldots,y_6$, there is one more point on $C$ to avoid.
A conic containing $y_3,\ldots,y_6$ and one of $y_1,y_2$ contains no other points on $C$, since $C$ is irreducible.
The $4$ conics spanned by $y_1,y_2$ and three of $y_3,\ldots,y_6$ have at most one more point on $C$.
Altogether, 
there are at most $4+2+8 +4= 18$ points to avoid.
Since $C$ contains at least $19$ points of $A$, this is possible.

By construction $B$ has no three points collinear and no six points co-conic.
Therefore, by Lemma \ref{lem:triplepreimage}, $\varphi^3_B(A\backslash B)$ contains no line with more than two preimages, 
and the lines in $\varphi^3_B(A\backslash B)$ are not all concurrent since $A$ is not contained in a cubic.

\bigskip
\noindent\textbf{Case 3a:} There is a line $L \subset \R\PP^2$ passing through at least $14$ points of $A$,
and $|A\backslash L|\geq 10$.

We will choose seven points $y_1,\ldots,y_7\in A$ with $y_1,y_2,y_3,y_4$ off of $L$ and $y_5,y_6,y_7$ on $L$,
in such a way that no six are co-conic,
and no three are collinear except for $y_5,y_6,y_7$.
By applying Theorem \ref{thm:SG} to $A\backslash L$,
we find two points $y_1,y_2\in A\backslash L$ such that the line spanned by these two points contains no other points of $A \backslash L$.
We choose $y_3\in A\backslash L$ to be any other point off of $L$. Then there must be some point $y_4 \in A\backslash L$ not on a line through two of $y_1,y_2,y_3$; otherwise, $A \backslash L$ would be contained in the union of the lines through $y_1,y_3$ and $y_2,y_3$, so $A$ would be contained in a cubic.
Thus $y_1,y_2,y_3,y_4$ are in $A\backslash L$ and have no three collinear.

There are six lines spanned by $y_1,y_2,y_3,y_4$, which hit $L$ in at most six points; let $X_1$ be the set of these points. 
We choose $y_5$ to be any point in $A\cap L$ avoiding $X_1$.
Because no three of $y_1,\ldots,y_5$ are collinear, 
there is a unique conic through $y_1,\ldots,y_5$, which must be irreducible.
This conic intersects $L$ in at most two points
(one of which is $y_5$); let $X_2$ consist of these points.
We choose $y_6 \in A\cap L$ to avoid the at most $8$ points in $X_1\cup X_2$, so that $y_1,\ldots,y_6$ have no three collinear and are not all co-conic. 
Finally, we will pick $y_7\in L$ to avoid any conic through five of $y_1,\ldots,y_6$.
Note that a conic containing $y_5,y_6$ and any three of $y_1,y_2,y_3,y_4$ would not contain any more points on $L$, so for such a conic we do not have to avoid any points.
There is a unique conic through $y_1,y_2,y_3,y_4,y_6$,
and we let $X_3$ consist of the at most $2$ points where this conic intersects $L$ (including $y_6$).
Then we pick $y_7 \in A\cap L$ to avoid the at most $10$ points in $X_1\cup X_2\cup X_3$.
Then $B = \{y_1,\ldots,y_7\}$ has only one collinear triple and no six on a conic,
so Lemma \ref{lem:triplepreimage} tells us that $\varphi^3_B(A\backslash B)$ contains at most one line with more than two preimages.

It remains to show that we can select $B$ so that the lines of $\varphi^3_B(A\backslash B)$ with one or two preimages are not all concurrent. 
Suppose the lines with one or two preimages are all concurrent; then there is a cubic $D$ containing $A\backslash L$. 
There are at most three points of $L$ lying on the cubic $D$, so we may repeat the procedure  above for choosing $y_5,y_6,y_7$,
while also avoiding these three points.
This means that, when choosing $y_7$, we need to avoid at most $13$ points, which is possible since $L$ contains at least $14$ points of $A$.
After this rechoosing, the lines in $\varphi^3_B(A\backslash B)$ with one or two preimages are not all concurrent.
Indeed, otherwise there would be a cubic $E$ that  contains $A\backslash L$, 
just like $D$,
but that intersects $L$ in three different points.
Then $D$ and $E$ could not share any component,
because that would imply they have a common point on $L$.
Thus, by B\'ezout's inequality, they would intersect in at most $9$ points, contradicting the assumption that $A\backslash L$ has at least $10$ points.

\bigskip
\noindent \textbf{Case 3b:} There is a line $L \subset \R\PP^2$ passing through at least $14$ points of $A$,
we have $|A\backslash L|\leq 9$,
and there is a line $L'$ that contains at least three points of $A\backslash L$.

We will choose $y_1,\ldots,y_7\in A$ with $y_1,y_2,y_3$ outside $L\cup L'$, $y_4,y_5$ on $L'$, and $y_6,y_7$ on $L$.
Note that $|A\backslash L|\geq 6$, since otherwise $A\backslash L$ would be contained in a conic and $A$ would be contained in a cubic.
Similarly, $A\backslash (L\cup L')$ is not contained in a line,
so $A\backslash (L\cup L')$ consists of three to six points.

We choose any non-collinear $y_1, y_2,y_3\in A\backslash (L\cup L')$.
Because $|A\backslash (L\cup L')|\leq 6$,
there is at most one line that contains four or more points of $A\backslash (L\cup L')$,
and we choose $y_4,y_5\in A\cap L'$ to avoid this line, which is possible since $|A\cap L'|\geq 3$.
Then $y_1,\ldots,y_5$ have at most three collinear. 
Moreover, if a line passes through three of $y_1,\ldots,y_5$, then it passes through at most one more point of $A\backslash L$, and thus  it passes through at most two more points of $A$.

We choose $y_6\in L$ to avoid any line or conic determined by $y_1,\ldots,y_5$,
and we choose $y_7\in L$ to avoid any line or conic determined by $y_1,\ldots,y_6$. 
This is certainly possible, since in this case we have $|A\cap L|\geq 250-9 = 241$.
Then $B =\{y_1,\ldots,y_7\}$ has no six on a conic, at most three on a line, 
and any line through three passes through at most two more points of $A$.
Therefore, by Lemma \ref{lem:triplepreimage}, $\varphi^3_B(A\backslash B)$ contains no line with more than two preimages, 
and the lines in $\varphi^3_B(A\backslash B)$ are not all concurrent,
so $B$ is as required.

\bigskip
\noindent \textbf{Case 3c:} There is a line $L \subset \R\PP^2$ passing through at least $14$ points of $A$,
we have $|A\backslash L|\leq 9$,
and $A\backslash L$ has no three on a line.

We choose $y_1,\ldots,y_5\in A\backslash L$ arbitrarily,
and then we choose $y_6,y_7\in L$ so that $B = \{y_1,\ldots,y_7\}$ has no three collinear and no six co-conic.
Then $B$ is as required.
\end{proof}

\newpage
\begin{remark}
We calculated the number $250$ in this lemma with some care,
but we have not optimized it as much as possible.
For instance,
in Case 1 we could choose $y_1,y_2$ using Theorem \ref{thm:SG}, which would give one fewer line to avoid in the subsequent choices.
We have chosen not to make such adjustments, because they would complicate the process even more, 
while only giving small improvements.
We believe that with the current approach the condition on the size of $A$ could not be removed entirely.
\end{remark}

We are now ready to complete the proof of our main Theorem \ref{thm:main}.

\begin{proof}
Lemma \ref{lem:goodBforcubics} gives us a seven-point set $B \subset A$ such that $\varphi^3_B(A\backslash B)$ has at most one line with more than two preimages, and the lines with one or two preimages are not all concurrent.

If there is a line $L \in \varphi^3_B(A\backslash B)$ with exactly two preimages in $A\backslash B$, we are done. 
Indeed, for any point $z\in L$,
the cubic curve corresponding to $z$ passes through precisely nine points of $A$: 
the seven points of $B$ and the two points mapping to $L$. (Note that in this case, the resulting cubic is not uniquely determined, since $L$ cuts out a one dimensional pencil of cubics passing through these nine points of $A$)

If there is no point in $\varphi^3_B(A\backslash B)$ with exactly two preimages, 
then $\varphi^3_B(A\backslash B)$ is a set of lines, at most one of which has three or more preimages, while the other lines have unique preimages and are not all concurrent.
Thus we may apply Lemma \ref{lem:SGdual} to obtain a point $z \in \R\PP^2$, not on the line with multiple preimages, such that exactly two lines $\varphi^3_B(x), \varphi^3_B(y)\in \varphi^3_B(A\backslash B)$ pass through it, and both these lines have a unique preimage. 
Then the cubic curve corresponding to $z$ passes through $B \cup\{x,y\}$ and no other points of $A$, so it is an ordinary cubic.
\end{proof}

\begin{remark}
Recall that five points determine a unique conic if and only if no four of them are collinear. In other words, one can deduce uniqueness for degree two curves by looking at degree one relations. On the other hand, nine points might determine several cubic curves for more subtle algebraic reasons: one cannot determine uniqueness for degree three curves by looking at lower degree relations. For example, the nine intersection points of two irreducible cubics will generically have no three points collinear and no six points co-conic (see the Cayley-Bacharach theorem). This difference is why our theorem may deliver non uniquely determined cubics. Lemma 4.4 prunes lower degree algebraic relations, but cannot handle degree three algebraic relations. 
\end{remark}

\section{Conclusion}
Our main result shows that for any large enough set of points in $\R\PP^2$, not all lying on a cubic curve, there must exist a cubic curve passing through exactly nine points of the set. 
Our approach does not seem to apply to degree four and higher equations. 
We make key use of Lemma \ref{lem:eisenbud} to construct sets of seven points that determine a two dimensional space of cubics. This is possible because a set of seven points determines a higher dimensional space of cubics only if it possesses many points on a line or a conic. On the other hand, 12 points may determine a three dimensional space of quartics even if no three points are on a line, no six are on a conic, and no ten are on a cubic. For this reason, one cannot even define the analogue of the map $\varphi_B^4$ taking points in $A \setminus B$ to lines in the parameter space of quartics. 

Using Lemma 2.1, one could potentially construct a set of ten points $B\subset A$ (without too many on a line or too many on a conic) and then consider a well defined map from $A \setminus B$ to hyperplanes in the parameter space $\R\PP^4$. The problem of finding an ordinary quartic then reduces to a Sylvester-Gallai problem for points and hyperplanes in $\R\PP^4$. Given a set of hyperplanes in $\R\PP^4$ (each representing a parameter space of quartics through 11 points), one would like to find a point lying on exactly four hyperplanes (representing a quartic passing through exactly 14 points of the set $A$). In general there is no Sylvester-Gallai theorem avaiable for points and hyperplanes in $\R\PP^n$. For example, there are sets of points in $\R\PP^3$ such that no three determine an ordinary plane (consider e.g. points lying on two fixed lines). This obstruction does not rule out the approach, but suggests it may be difficult. It also suggests that the generalization of Wiseman and Wilson's theorem to degree four and higher curves may be false, just as the Sylvester-Gallai theorem is false in higher dimensions. A definitive answer either way would be very interesting.

\section*{Acknowledgements} 
The proofs in this paper were developed during the 2019 Combinatorics REU at Baruch College in New York. 
The REU was supported by
NSF awards DMS-1802059 and DMS-1851420.
We would like to thank Adam Sheffer and Pablo Sober\'on for organizing the REU and for all their support.


\begin{thebibliography}{99}

\bibitem{BVZ}
Thomas Boys, Claudiu Valculescu, and Frank de Zeeuw,
\emph{On the number of ordinary conics},
SIAM Journal of Discrete Mathematics {\bf 30}, 1644--1659, 2016.

\bibitem{BMP} 
Peter Brass, William Moser, and J\'anos Pach, \emph{Research problems in Discrete Geometry}, Springer, New York, 2006.

\bibitem{Polish}  A. Czapli\'{n}ski, M. Dumnicki,  \L. Farnik, J. Gwo\'{z}dziewicz, M. Lampa-Baczy\'{n}ska, G. Malara,
T. Szemberg, J. Szpond, and H. Tutaj-Gasi\'{n}ska,  \emph{On the Sylvester-Gallai theorem for conics}, Rendiconti del Seminario Matematico della Universit\`a di Padova {\bf 136}, 205--223, 2016.

\bibitem{EGH}
David Eisenbud, Mark Green, and Joe Harris,
\emph{Cayley--Bacharach theorems and conjectures},
Bulletin of the American Mathematical Society {\bf 33}, 295--324, 1996.

\bibitem{E}
Peter D.T.A. Elliott,
\emph{On the number of circles determined by $n$ points},
Acta Mathematica Academiae Scientiarum Hungaricae \textbf{18}, 181--188, 1967.

\bibitem{GT}
B. Green and T. Tao,
\emph{On sets defining few ordinary lines}, Discrete \& Computational Geometry {\bf 50}, 409--468, 2013.


\bibitem{L} Jonathan Lenchner, 
\emph{Sylvester--Gallai results and other contributions to combinatorial and computational geometry},
PhD thesis,
Polytechnic University,
New York,
2008.

\bibitem{WW}  James Wiseman and Paul Wilson, \emph{A Sylvester theorem for conic sections}, Discrete $\&$ Computational Geometry {\bf 3}, 295--305, 1988.


\end{thebibliography}
\end{document}